\let\originalleft\left
\let\originalright\right
\renewcommand{\left}{\mathopen{}\mathclose\bgroup\originalleft}
\renewcommand{\right}{\aftergroup\egroup\originalright}
\newcommand{\abs}[1]{\left\vert #1 \right\vert}
\newcommand{\diag}[1]{\mathrm{diag} \left( #1 \right)}
\newcommand{\ind}[1]{\mathds{1}_{\left\lbrace #1 \right\rbrace}}
\newcommand{\norm}[1]{\left\| #1 \right\|}
\newcommand{\transpose}{\mathsf{T}}
\newcommand{\dd}{\mathrm{d}}
\newcommand{\expect}{\mathbb{E}}
\newcommand{\expectation}[1]{\expect \left( #1 \right)}
\newcommand{\genQ}{\mathcal{Q}}
\newcommand{\prequadvar}[1]{\left\langle #1 \right\rangle}
\newcommand{\stdot}{\mathbin{\bm{\cdot}}}
\newcommand{\eps}{\epsilon}
\theoremstyle{plain}
\newtheorem{theorem}{Theorem}[section]
\newtheorem{lemma}[theorem]{Lemma}
\theoremstyle{definition}
\title{Weak convergence of stochastic integrals with respect to the state occupation measure of a Markov chain}
\author{H.\ M.\ Jansen}
\begin{document}
\maketitle

\begin{abstract}
\noindent Our aim is to find sufficient conditions for weak convergence of stochastic integrals with respect to the state occupation measure of a Markov chain. First, we study properties of the state indicator function and the state occupation measure of a Markov chain. In particular, we establish weak convergence of the state occupation measure under a scaling of the generator matrix. Then, relying on the connection between the state occupation measure and the Dynkin martingale related to the state indicator function, we provide sufficient conditions for weak convergence of stochastic integrals with respect to the state occupation measure.
\end{abstract}

\noindent {\it Keywords.} Markov chain $\star$ state occupation measure $\star$ weak convergence $\star$ stochastic integral
\newline

\noindent $^{1}$ Korteweg-de Vries Institute for Mathematics, University of Amsterdam, Science Park 904, 1098 XH Amsterdam, the Netherlands.
\newline

\noindent $^{2}$ TELIN, Ghent University, Sint-Pietersnieuwstraat 41, B-9000 Ghent, Belgium.
\newline

\noindent {\it E-mail}. {\tt h.m.jansen@uva.nl}

\section{Introduction}
We are interested in weak convergence of a specific class of stochastic integrals which arises in the analysis of Markov-modulated queueing systems. More specifically, we would like to find conditions under which
\begin{align}
H_{n}^{-} \stdot G_{n} \Rightarrow H^{-} \stdot G.
\label{eq:introconv}
\end{align}
Here, $X \stdot Y$ denotes the It\^{o} integral of $X$ with respect to $Y$ and $\Rightarrow$ denotes weak convergence. In addition, $H_{n}$ and $G_{n}$ are stochastic processes satisfying $H_{n} \Rightarrow H$ and $G_{n} \Rightarrow G$, with $H_{n}$ being a suitable integrand and $G_{n}$ denoting the (scaled and centered) state occupation measure of an irreducible continuous-time Markov chain.

Rather remarkably, this case does not seem to be covered by the known results dealing with convergence as in Eq.\ \eqref{eq:introconv}. Indeed, to guarantee convergence as in Eq.\ \eqref{eq:introconv}, it is typically required that $G_{n}$ is a martingale or that $G_{n}$ satisfies the P-UT condition. Neither of these requirements is satisfied when $G_{n}$ is the state occupation measure of a Markov chain, even though $G_{n}$ has very nice convergence properties in this case.
Nevertheless, we can find conditions under which the weak convergence in Eq.\ \eqref{eq:introconv} does hold. The key insight underlying these conditions is that we should put restrictions on the total variation of $H_{n}$.

The remainder of this note is organized as follows. In Section \ref{sec:preliminaries}, we derive basic properties of an irreducible, continuous-time Markov chain, its Dynkin martingale, and its state occupation measure. In Section \ref{sec:markovintegral}, we state and prove our main result, which gives conditions that guarantee weak convergence of stochastic integrals with respect to the state occupation measure of an irreducible Markov chain. In Section \ref{sec:boundedvariation}, we have collected some auxiliary results concerning functions of bounded variation, which are used to prove the convergence of the stochastic integrals.

\section{Preliminaries}
\label{sec:preliminaries}
\subsection{Basic properties of Markov chains}
\label{subsec:basicmarkovchain}
Let $J$ be a continuous-time Markov chain with state space $\left\lbrace 1 , \dotsc , d \right\rbrace$ for some $d \in \mathbb{N}$. Let $\genQ$ denote the $d \times d$ generator matrix corresponding to $J$. The state indicator function of $J$ is the $\mathbb{R}^{d}$-valued function $K$ defined via
\begin{align*}
K \left( i ; t \right)
&= \ind{J \left( t \right) = i}
\end{align*}
for $i \in \left\lbrace 1 , \dotsc , d \right\rbrace$ and $t \geq 0$. The function $K$ plays an important role via the state occupation measure, which is the vector-valued stochastic process
\begin{align*}
L \left( t \right) = \int_{0}^{t} K \left( s \right) \, \dd s.
\end{align*}
On an intuitive level, the state indicator function $K$ registers in which state $J$ is, while the state occupation measure $L$ measures how much time $J$ has spent in each state up to a certain time.

Anticipating upcoming results, we derive some equalities. Assume that the generator matrix $\genQ$ is irreducible with a $d \times 1$ column vector $\pi$ denoting its stationary distribution, i.e., $\pi$ is the unique probability vector solving the equation $\pi^{\transpose} \genQ = 0$. Additionally, let $D$ denote the deviation matrix corresponding to $\genQ$; its entries are given by
\begin{align*}
D_{ij} = \int_{0}^{\infty} \left( \mathbb{P} \left( J \left( s \right) = j \, \middle\vert \, J \left( 0 \right) = i \right) - \pi_{j} \right) \, \dd s.
\end{align*}
The integral is well defined, because the irreducibility of $\genQ$ implies that the probability $\mathbb{P} \left( J \left( t \right) = j \, \middle\vert \, J \left( 0 \right) = i \right)$ converges exponentially fast to $\pi_{j}$ as $t \to \infty$ (cf.\ \cite[p.\ 356]{CV2002}). Thus, the deviation matrix $D$ provides a measure for how much the Markov chain $J$ deviates from its stationary distribution when it starts in a fixed point.

Following \cite{CV2002}, we define the ergodic matrix $\Pi = \mathsf{1} \pi^{\transpose}$ and the fundamental matrix $F = D + \Pi$, where $\mathsf{1}$ denotes a $d \times 1$ vector with each entry being $1$. 
Some straightforward arguments (cf.\ \cite{CV2002}) demonstrate that
\begin{align}
\genQ F = F \genQ = \Pi - I = D \genQ = \genQ D
\label{eq:FgenQ}
\end{align}
and
\begin{align}
\pi^{\transpose} D = 0.
\label{eq:pitransposeD}
\end{align}
Applying these identities, we find that
\begin{align*}
\left( \genQ F \right)^{\transpose} \diag{\pi} F
&= \left( \genQ D \right)^{\transpose} \diag{\pi} D + \left( \genQ D \right)^{\transpose} \diag{\pi} \Pi
\end{align*}
and
\begin{align*}
\left( \genQ D \right)^{\transpose} \diag{\pi} D
&= \left( \mathsf{1} \pi^{\transpose} - I \right)^{\transpose} \diag{\pi} D\\
&= \pi \mathsf{1}^{\transpose} \diag{\pi} D - \diag{\pi} D\\
&= \pi \pi^{\transpose} D - \diag{\pi} D\\
&= - \diag{\pi} D.
\end{align*}
Moreover, it holds that
\begin{align*}
\left( \genQ D \right)^{\transpose} \diag{\pi} \Pi
= \left( \genQ D \right)^{\transpose} \pi \pi^{\transpose} 
= \left( \genQ D \right)^{\transpose} \left( \pi \pi^{\transpose} \right)^{\transpose} 
= \left( \pi \pi^{\transpose} \genQ D \right)^{\transpose} 
= 0,
\end{align*}
so
\begin{align}
F^{\transpose} \left( \genQ^{\transpose} \diag{\pi} + \diag{\pi} \genQ \right) F = - \left( \diag{\pi} D + D^{\transpose} \diag{\pi} \right). \label{eq:quadvardevmat}
\end{align}
Given an irreducible generator matrix $\genQ$, the vectors and matrices $\mathsf{1}$, $\pi$, $\Pi$, $F$, and $D$ will be as described above, unless stated otherwise.

\subsection{The Dynkin martingale of a Markov chain}
\label{subsec:markovdynkin}
Markov chains are closely connected to martingales via Dynkin's formula. We will rely heavily on this when proving weak convergence of state occupation measures via the Martingale Central Limit Theorem (MCLT). 

In the next result, we define a martingale $Y$, which is the Dynkin martingale. Additionally, we note that $Y$ is a locally square-integrable martingale. For this class of martingales there are some very strong convergence results available, which typically depend on the predictable quadratic variation process (also called the compensator) of such martingales converging in a suitable manner. We would like to invoke those convergence results later on, so we present the explicit form of the compensator of $Y$ as well. 

\begin{theorem}
\label{thm:dynkinmartingale}
Let $d \in \mathbb{N}$ and let $\genQ$ be a $d \times d$ generator matrix. Let $J$ be a continuous-time Markov chain with state space $\left\lbrace 1 , \dotsc , d \right\rbrace$, generator matrix $\genQ$, and state indicator function $K$. Then the process $Y$ defined via
\begin{align}
Y \left( t \right) 
&= K \left( t \right) - K \left( 0 \right) - \int_{0}^{t} \genQ^{\transpose} K \left( s \right) \, \dd s
\label{eq:dynkin}
\end{align}
is a c\`{a}dl\`{a}g martingale having predictable quadratic variation process
\begin{align}
\prequadvar{Y} \left( t \right)
&= \int_{0}^{t} \diag{\genQ^{\transpose} K \left( s \right)} \, \dd s - \int_{0}^{t} \genQ^{\transpose} \diag{K \left( s \right)} \, \dd s - \int_{0}^{t} \diag{K \left( s \right)} \genQ \, \dd s \label{eq:prequadvardynkin}
\end{align}
and satisfying
\begin{align*}
\expectation{ Y \left( t \right)^{\transpose} Y \left( t \right) } < \infty
\end{align*}
for all $t \geq 0$.
\end{theorem}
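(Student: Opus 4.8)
The plan is to verify the three claims separately, dispatching the martingale property, path regularity, and integrability quickly, and reserving the substantive work for the compensator in Eq.~\eqref{eq:prequadvardynkin}. Throughout I write $e_{i}$ for the $i$-th standard basis vector of $\rd$, identified with the coordinate/state-indicator function $k \mapsto \ind{k=i}$, so that $K \left( t \right) = e_{J \left( t \right)}$.

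First I would record the easy facts. Since $K \left( t \right)$ takes values among $e_{1} , \dotsc , e_{d}$, we have $\norm{K \left( t \right)} \leq 1$, and the paths of $K$ inherit the c\`{a}dl\`{a}g property of $J$; the drift term in Eq.~\eqref{eq:dynkin} is Lipschitz in $t$, so $Y$ is c\`{a}dl\`{a}g. Moreover $\norm{Y \left( t \right)} \leq 2 + t \norm{\genQ^{\transpose}}$ is deterministically bounded for each fixed $t$, which yields $\expectation{Y \left( t \right)^{\transpose} Y \left( t \right)} < \infty$ and shows $Y$ is (locally) square-integrable. For the martingale property I would apply Dynkin's formula to each coordinate function $e_{i}$, which gives that $e_{i}^{\transpose} K \left( t \right) - e_{i}^{\transpose} K \left( 0 \right) - \int_{0}^{t} \left( \genQ e_{i} \right)_{J \left( s \right)} \dd s$ is a martingale; since $\left( \genQ e_{i} \right)_{J \left( s \right)} = \genQ_{J \left( s \right) , i} = \left( \genQ^{\transpose} K \left( s \right) \right)_{i}$, stacking the $d$ coordinates shows that $Y$ is an $\rd$-valued martingale.

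For Eq.~\eqref{eq:prequadvardynkin} I interpret $\prequadvar{Y}$ entrywise, so $\prequadvar{Y}_{ij} = \prequadvar{Y_{i}, Y_{j}}$ is the unique predictable finite-variation process, null at the origin, for which $Y_{i} Y_{j} - \prequadvar{Y_{i}, Y_{j}}$ is a martingale. Writing $Y = K - K \left( 0 \right) - A$ with $A \left( t \right) = \int_{0}^{t} \genQ^{\transpose} K \left( s \right) \dd s$ continuous, predictable, and of finite variation, integration by parts gives $Y_{i} \left( t \right) Y_{j} \left( t \right) = \int_{0}^{t} Y_{i} \left( s^{-} \right) \dd Y_{j} \left( s \right) + \int_{0}^{t} Y_{j} \left( s^{-} \right) \dd Y_{i} \left( s \right) + \optquadvar{Y_{i}, Y_{j}} \left( t \right)$, where the two stochastic integrals are martingales (bounded integrands against the square-integrable martingale $Y$), so $\prequadvar{Y_{i}, Y_{j}}$ is the predictable compensator of $\optquadvar{Y_{i}, Y_{j}}$; and because $A$ is continuous, $\optquadvar{Y_{i}, Y_{j}} = \optquadvar{K_{i}, K_{j}}$. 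The key simplification is the elementary identity $K_{i} \left( t \right) K_{j} \left( t \right) = \ind{J \left( t \right) = i} \ind{J \left( t \right) = j} = \delta_{ij} K_{i} \left( t \right)$, i.e.\ $K \left( t \right) K \left( t \right)^{\transpose} = \diag{K \left( t \right)}$, reflecting that the states are mutually exclusive. Applying integration by parts again to $\optquadvar{K_{i}, K_{j}} \left( t \right) = K_{i} \left( t \right) K_{j} \left( t \right) - K_{i} \left( 0 \right) K_{j} \left( 0 \right) - \int_{0}^{t} K_{i} \left( s^{-} \right) \dd K_{j} \left( s \right) - \int_{0}^{t} K_{j} \left( s^{-} \right) \dd K_{i} \left( s \right)$, substituting $K_{i} K_{j} = \delta_{ij} \left( K_{i} \left( 0 \right) + A_{i} + Y_{i} \right)$ and $\dd K = \dd A + \dd Y$, and separating off the martingale terms (the integrals against $\dd Y$ together with $\delta_{ij} Y_{i}$), the surviving predictable finite-variation part is
\begin{align*}
\prequadvar{Y_{i}, Y_{j}} \left( t \right) = \delta_{ij} A_{i} \left( t \right) - \int_{0}^{t} K_{i} \left( s \right) \, \dd A_{j} \left( s \right) - \int_{0}^{t} K_{j} \left( s \right) \, \dd A_{i} \left( s \right),
\end{align*}
with $K \left( s^{-} \right)$ replaced by $K \left( s \right)$ since $A$ is continuous. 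Finally the identity $K_{i} \left( s \right) \left( \genQ^{\transpose} K \left( s \right) \right)_{j} = \ind{J \left( s \right) = i} \genQ_{ij} = \genQ_{ij} K_{i} \left( s \right)$ recasts the three terms as the $\left( i , j \right)$ entries of $\diag{\genQ^{\transpose} K \left( s \right)}$, $\diag{K \left( s \right)} \genQ$, and $\genQ^{\transpose} \diag{K \left( s \right)}$ respectively, which reassemble into Eq.~\eqref{eq:prequadvardynkin}.

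I expect the bookkeeping in this last step to be the main obstacle: correctly isolating the genuine martingale contributions from the surviving predictable terms and matching the latter to the off-diagonal matrix products, where a single index slip would corrupt the non-diagonal structure. As an independent check I would note that a transition of $J$ from $a$ to $b \neq a$, occurring at rate $\genQ_{ab}$, shifts $K$ by $e_{b} - e_{a}$ and hence contributes $\left( e_{b} - e_{a} \right) \left( e_{b} - e_{a} \right)^{\transpose}$ to the optional covariation $\optquadvar{Y}$; taking the predictable compensator, summing over $b$, and using the row-sum relation $\sum_{b \neq a} \genQ_{ab} = - \genQ_{aa}$ must reproduce the same integrand.
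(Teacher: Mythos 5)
Your proposal is correct, but it cannot be compared line-by-line with the paper's argument for the simple reason that the paper offers no argument: the proof of Theorem \ref{thm:dynkinmartingale} is a bare citation to two lemmas of an external reference. What you supply is a complete, self-contained derivation, and the details check out. The martingale property via Dynkin's formula applied to the coordinate functions, with the identification $\left( \genQ e_{i} \right)_{J \left( s \right)} = \left( \genQ^{\transpose} K \left( s \right) \right)_{i}$, is exactly right, as are the c\`{a}dl\`{a}g and integrability claims (the deterministic bound $\norm{Y \left( t \right)} \leq 2 + t \norm{\genQ^{\transpose}}$ makes the last statement trivial). For the compensator, your chain of reductions --- $\prequadvar{Y_{i},Y_{j}}$ is the compensator of $\optquadvar{Y_{i},Y_{j}}$, which equals $\optquadvar{K_{i},K_{j}}$ because the drift $A$ is continuous and of finite variation, followed by integration by parts and the exclusivity identity $K_{i} K_{j} = \delta_{ij} K_{i}$ --- lands precisely on $\delta_{ij} A_{i} \left( t \right) - \int_{0}^{t} K_{i} \, \dd A_{j} - \int_{0}^{t} K_{j} \, \dd A_{i}$, and the entrywise identification with $\diag{\genQ^{\transpose} K}$, $\diag{K} \genQ$, and $\genQ^{\transpose} \diag{K}$ is correct, including the order in which you assign the two cross terms. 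Your closing sanity check (a jump from $a$ to $b$ contributing $\genQ_{ab} \left( e_{b} - e_{a} \right) \left( e_{b} - e_{a} \right)^{\transpose}$ to the compensator's density, summed over $b \neq a$) does reproduce the same matrix, which is a worthwhile independent confirmation that the signs and transposes are right. The only point worth making explicit in a final write-up is that the candidate compensator is continuous, hence predictable, and null at the origin, so that uniqueness of the predictable finite-variation part applies; you gesture at this but do not say it. Net effect: where the paper outsources the result, your argument proves it from first principles at the cost of about a page of bookkeeping.
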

\begin{proof}
See \cite[Lem.~2.6.18]{ae2004} and \cite[Lem.~3.8.5]{ae2004}.
\end{proof}
Given a continuous-time Markov chain $J$ as in Theorem \ref{thm:dynkinmartingale}, we will call the process $Y$ defined above the Dynkin martingale associated with $J$.

The last statement of Theorem \ref{thm:dynkinmartingale} implies, in some sense, that $Y$ is a square-integrable martingale. However, the definitions of this term differ throughout the literature: $Y$ is square integrable in the terminology of \cite[Def.\ 1.5.1]{ks1998}, but at this point it is not clear whether $Y$ is square integrable in the terminology of \cite[Def.\ I.1.41]{js2003}. For us, this is not really important, because the theorem implies that $Y$ is locally square integrable for any (reasonable) definition of a square-integrable martingale. This is sufficient for our purposes.

\subsection{Weak convergence of the state occupation measure of a Markov chain}
\label{subsec:stateoccupationmeasureconv}

In the previous subsection, we have defined the Dynkin martingale corresponding to a Markov chain and presented some properties of this martingale. Here, we will leverage these results to obtain the most important results of this section, namely convergence in probability and weak convergence of the state occupation measure of a scaled Markov chain.

The first result is basically the ergodic theorem for irreducible Markov chains. It states that, under a specific scaling, the state occupation measure of an irreducible Markov chain converges uoc in probability to the stationary distribution. On a more intuitive level, this means that a background Markov chain will be close to equilibrium under this specific scaling.

\begin{theorem}
\label{thm:ergodic}
Let $\alpha > 0$ and $d \in \mathbb{N}$. Let $\genQ$ be a $d \times d$ irreducible generator matrix and let $J_{n}$ be a continuous-time Markov chain with state space $\left\lbrace 1 , \dotsc , d \right\rbrace$, generator matrix $n^{\alpha} \genQ$, and state indicator function $K_{n}$. Then, for $n \to \infty$, it holds that
\begin{align*}
\sup_{0 \leq t \leq T} \norm{ n^{\alpha / 2 - \eps} \int_{0}^{t} \left( K_{n} \left( s \right) - \pi \right) \, \dd s}
\end{align*}
converges to $0$ in probability for each $\eps > 0$ and $T > 0$.
\end{theorem}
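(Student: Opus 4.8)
The plan is to rewrite the time integral of $K_{n} - \pi$ exactly in terms of the Dynkin martingale and two bounded boundary terms, and then to kill the martingale term with Doob's maximal inequality. First I would record the algebraic identity that drives everything. Because the chain occupies exactly one state at each time, $\mathsf{1}^{\transpose} K_{n}(s) = 1$; and since $\pi^{\transpose} \genQ = 0$ gives $\genQ^{\transpose} \pi = 0$, transposing the identity $\genQ F = \Pi - I$ from Eq.~\eqref{eq:FgenQ} yields $F^{\transpose} \genQ^{\transpose} = \pi \mathsf{1}^{\transpose} - I$. Applying this to $K_{n}(s)$ and using $\Pi = \mathsf{1} \pi^{\transpose}$, I get $F^{\transpose} \genQ^{\transpose} K_{n}(s) = \pi - K_{n}(s)$, so that
\[
\int_{0}^{t} \left( K_{n}(s) - \pi \right) \dd s = - F^{\transpose} \genQ^{\transpose} \int_{0}^{t} K_{n}(s) \, \dd s.
\]

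Next I would invoke the Dynkin martingale of Theorem~\ref{thm:dynkinmartingale} for the chain $J_{n}$, whose generator is $n^{\alpha} \genQ$. Writing $Y_{n}$ for this martingale, Eq.~\eqref{eq:dynkin} gives $n^{\alpha} \genQ^{\transpose} \int_{0}^{t} K_{n}(s)\, \dd s = K_{n}(t) - K_{n}(0) - Y_{n}(t)$. Combining this with the previous display produces the exact representation
\[
n^{\alpha / 2 - \eps} \int_{0}^{t} \left( K_{n}(s) - \pi \right) \dd s = - n^{-\alpha / 2 - \eps} F^{\transpose} \left( K_{n}(t) - K_{n}(0) - Y_{n}(t) \right).
\]
This is the heart of the matter: the scaling $n^{-\alpha/2 - \eps}$ now multiplies a uniformly bounded boundary term plus a martingale whose typical size grows only like $n^{\alpha/2}$, so the excess power $n^{-\eps}$ forces the whole expression to zero.

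For the boundary term, $\norm{K_{n}(t)} = 1$ for every $t$, so $F^{\transpose}(K_{n}(t) - K_{n}(0))$ is bounded by a constant uniformly in $t$, and after multiplication by $n^{-\alpha/2 - \eps}$ its supremum vanishes deterministically. For the martingale term I would apply Doob's $L^{2}$ maximal inequality componentwise and sum, bounding $\expectation{\sup_{0 \leq t \leq T} \norm{Y_{n}(t)}^{2}}$ by a constant multiple of $\expectation{\norm{Y_{n}(T)}^{2}} = \expectation{\mathrm{trace} \, \prequadvar{Y_{n}}(T)}$. Here the explicit compensator in Eq.~\eqref{eq:prequadvardynkin}, evaluated for the generator $n^{\alpha} \genQ$, is linear in the generator and hence scales like $n^{\alpha}$, with entries bounded by a constant times $n^{\alpha} T$ since $K_{n}$ is bounded and $\genQ$ is fixed. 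This gives $\expectation{\sup_{0 \leq t \leq T} \norm{Y_{n}(t)}^{2}} \leq C n^{\alpha} T$, whence $\expectation{\sup_{0 \leq t \leq T} \norm{n^{-\alpha/2 - \eps} F^{\transpose} Y_{n}(t)}^{2}} \leq C' T \, n^{-2 \eps} \to 0$.

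Putting the two pieces together, the right-hand side of the representation tends to $0$ in $L^{2}$ (the martingale part) and deterministically (the boundary part), so the supremum on the left converges to $0$ in probability, as required. The step I expect to be the main obstacle is the martingale estimate: one has to track correctly how the compensator scales with $n^{\alpha}$ and justify applying Doob's inequality to a vector-valued martingale, which I would handle by arguing componentwise and using $\sup_{t} \norm{Y_{n}(t)}^{2} \leq \sum_{i} \sup_{t} Y_{n,i}(t)^{2}$ before taking expectations. The remaining ingredients are the linear-algebra identity and the trivial bound $\norm{K_{n}} = 1$.
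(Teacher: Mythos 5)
Your proof is correct, and it shares the paper's skeleton --- the exact decomposition via the Dynkin martingale together with the identity $F^{\transpose} \genQ^{\transpose} K_{n}(s) = \pi - K_{n}(s)$ --- but it handles the crucial analytic step differently. The paper kills the martingale term by verifying the hypotheses of the Martingale Central Limit Theorem (compensator converging to zero, jump sizes bounded by $n^{-\alpha/2-\eps}$) and concluding that $n^{-\alpha/2-\eps} Y_{n}$ converges weakly to the zero process, then upgrading to convergence in probability because the limit is deterministic. You instead give a direct quantitative estimate: Doob's $L^{2}$ maximal inequality componentwise, combined with $\expectation{\norm{Y_{n}(T)}^{2}} = \expectation{\mathrm{trace}\, \prequadvar{Y_{n}}(T)} \leq C n^{\alpha} T$ read off from Eq.~\eqref{eq:prequadvardynkin}, yielding $\expectation{\sup_{t \leq T} \norm{n^{-\alpha/2-\eps} Y_{n}(t)}^{2}} = O(n^{-2\eps})$. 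Your route is more elementary (no weak-convergence machinery, no jump-size bookkeeping) and buys an explicit $L^{2}$ rate of decay $n^{-2\eps}$ for the supremum, which the paper's argument does not provide; the paper's route has the advantage that the MCLT verification it performs is reused almost verbatim in the proof of Theorem~\ref{thm:markovchainfclt}, where a genuine Gaussian limit appears and a soft argument is unavoidable. All the ingredients you rely on are legitimate: $Y_{n}(0)=0$ and the square-integrability statement in Theorem~\ref{thm:dynkinmartingale} justify $\expectation{Y_{n}(T) Y_{n}(T)^{\transpose}} = \expectation{\prequadvar{Y_{n}}(T)}$, Doob's inequality applies to the c\`{a}dl\`{a}g components, and the boundary term is handled by $\norm{K_{n}(t)} = 1$ exactly as in the paper.
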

\begin{proof}
Let $Y_{n}$ be the Dynkin martingale associated with $J_{n}$. We would like to apply the Martingale Central Limit Theorem (MCLT) to derive convergence of $n^{-\alpha / 2 - \eps} Y_{n}$ to the zero process, from which we will get convergence of the state occupation measure.

To be able to apply the MCLT (cf.\ \cite[Th.\ 2.1]{Whitt2007}), we have to verify several properties: we need convergence of the predictable quadratic variation process $\prequadvar{n^{-\alpha / 2 - \eps} Y_{n}}$, together with bounds on the maximum jump sizes of $n^{-\alpha / 2 - \eps} Y_{n}$ and $\prequadvar{n^{-\alpha / 2 - \eps} Y_{n}}$.

We obtain from Theorem \ref{thm:dynkinmartingale} that
\begin{align*}
&\prequadvar{ n^{-\alpha / 2 - \eps} Y_{n} } \left( t \right) \\
&\quad {} = n^{-\alpha - 2 \eps} \prequadvar{Y_{n} } \left( t \right) \\
&\quad {} = n^{-\alpha - 2 \eps} \int_{0}^{t} \diag{n^{\alpha} \genQ^{\transpose} K_{n} \left( s \right)} \, \dd s \\
&\quad \phantom{{} = {}} {} - n^{-\alpha - 2 \eps} \int_{0}^{t} n^{\alpha}\genQ^{\transpose} \diag{K_{n} \left( s \right)} \, \dd s - n^{-\alpha - 2 \eps} \int_{0}^{t} \diag{K_{n} \left( s \right)} n^{\alpha} \genQ \, \dd s.
\end{align*}
Because $K_{n}$ is bounded by $1$, it follows that $\prequadvar{n^{-\alpha / 2 - \eps} Y_{n}}$ converges to the zero process uniformly on compact intervals.

Moreover, $\prequadvar{n^{-\alpha / 2 - \eps} Y_{n}}$ is continuous, and the maximum jump size of each entry of $n^{-\alpha / 2 - \eps} Y_{n}$ is obviously bounded by $n^{-\alpha / 2 - \eps}$. Hence, the maximum jump size of $n^{-\alpha / 2 - \eps} Y_{n}$ and $\prequadvar{n^{-\alpha / 2 - \eps} Y_{n}}$ converges to $0$ as $n \to \infty$.
Then it follows from the MCLT (as presented in \cite[Th.\ 2.1]{Whitt2007}) that $n^{-\alpha / 2 - \eps} Y_{n}$ converges weakly to a Brownian motion whose predictable quadratic variation process is given by the zero process. In other words, $n^{-\alpha / 2 - \eps} Y_{n}$ converges weakly to the zero process.

Recalling that $K_{n}$ is bounded by $1$ and keeping in mind that $n^{-\alpha / 2 - \eps} Y_{n}$ converges weakly to the zero process, it is easy to see from the definition of $Y_{n}$ in Eq.\ \eqref{eq:dynkin} that the process
\begin{align*}
- n^{-\alpha / 2 - \eps} \int_{0}^{t} n^{\alpha} \genQ^{\transpose} K _{n} \left( s \right) \, \dd s
&= - n^{\alpha / 2 - \eps} \int_{0}^{t} \genQ^{\transpose} K _{n} \left( s \right) \, \dd s
\end{align*}
converges weakly to the zero process. Then
\begin{align}
- n^{\alpha / 2 - \eps} \int_{0}^{t} F^{\transpose} \genQ^{\transpose} K _{n} \left( s \right) \, \dd s
\label{eq:Ftransposetimesdynkin}
\end{align}
must converge weakly to the zero process, too. (Although $F$ denotes the fundamental matrix here, it could be any $d \times d$ matrix, of course.)

Now recall the matrix equalities related to the deviation matrix $D$ and the fundamental matrix $F$ that we derived earlier. From these equalities we obtain that
\begin{align}
F^{\transpose} \genQ^{\transpose} K_{n} \left( s \right)
&= \left( \genQ F \right)^{\transpose} K_{n} \left( s \right) = \left( \pi \mathsf{1}^{\transpose} - I \right) K_{n} \left( s \right) = \pi - K_{n} \left( s \right).
\label{eq:FtransposeQtransposeKn}
\end{align}
Combining this with the convergence of the process in Eq.\ \eqref{eq:Ftransposetimesdynkin}, it immediately follows that
\begin{align*}
n^{\alpha / 2 - \eps} \int_{0}^{t} \left( K_{n} \left( s \right) - \pi \right) \, \dd s
\end{align*}
converges weakly to the zero process. Because the zero process is a deterministic limit, the convergence actually holds in probability. Moreover, the process $n^{\alpha / 2 - \eps} \int_{0}^{t} \left( K_{n} \left( s \right) - \pi \right) \, \dd s$ is continuous and has a continuous limit, so the convergence holds in the supremum metric, as required.
\end{proof}

In addition to a weak convergence result for the Dynkin martingale, the next theorem contains two important observations concerning the typical fluctuations of the state occupation measure around its limit. The first is that the fluctuations are of order $n^{- \alpha / 2}$ when the transition rates of the Markov chain are sped up with a factor $n^{\alpha}$. The second is that (after appropriate scaling) these fluctuations are well described by a Brownian motion whose predictable quadratic variation process strongly depends on the deviation matrix of the Markov chain.

\begin{theorem}
\label{thm:markovchainfclt}
Let $\alpha > 0$ and $d \in \mathbb{N}$. Let $\genQ$ be a $d \times d$ irreducible generator matrix and let $J_{n}$ be a continuous-time Markov chain with state space $\left\lbrace 1 , \dotsc , d \right\rbrace$, generator matrix $n^{\alpha} \genQ$, and state indicator function $K_{n}$. Let $Y_{n}$ denote the Dynkin martingale associated with $J_{n}$. Then, for $n \to \infty$, the stochastic process $n^{- \alpha/2} Y_{n}$ converges weakly to a Brownian motion $Y$ having predictable quadratic variation process
\begin{align}
\prequadvar{Y} \left( t \right)
&= - \left( \genQ^{\transpose} \diag{\pi} + \diag{\pi} \genQ \right) t. \label{eq:Dynkinlimitquadvar}
\end{align}
Additionally, for $n \to \infty$, the stochastic process
\begin{align}
n^{\alpha / 2} \int_{0}^{t} \left( K_{n} \left( s \right) - \pi \right) \, \dd s 
\end{align}
converges weakly to a Brownian motion $X$ having predictable quadratic variation process
\begin{align}
\prequadvar{X} \left( t \right) = \left( \diag{\pi} D + D^{\transpose} \diag{\pi} \right) t. \label{eq:Xlimitquadvar}
\end{align}
\end{theorem}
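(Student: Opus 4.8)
The plan is to prove the two assertions in sequence, mirroring the structure of the proof of Theorem \ref{thm:ergodic} but now working at the critical scale $n^{-\alpha/2}$ rather than the subcritical scale $n^{-\alpha/2-\eps}$. First I would establish weak convergence of $n^{-\alpha/2} Y_n$ to the Brownian motion $Y$ via the MCLT, and then I would transfer this to the state occupation measure using the algebraic identity \eqref{eq:FtransposeQtransposeKn}, concluding with the quadratic-variation identity \eqref{eq:quadvardevmat}.

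For the first assertion, I would invoke the MCLT (as in \cite[Th.~2.1]{Whitt2007}). Because $n^{-\alpha/2} Y_n$ differs from the process $n^{-\alpha/2-\eps} Y_n$ of Theorem \ref{thm:ergodic} only in the power of $n$, the jump conditions are handled identically: the jumps of $Y_n$ stem solely from $K_n$ and are bounded by a constant, so the maximum jump of $n^{-\alpha/2} Y_n$ is of order $n^{-\alpha/2} \to 0$, while $\prequadvar{n^{-\alpha/2} Y_n}$ is continuous. The essential point is the predictable quadratic variation. From Theorem \ref{thm:dynkinmartingale}, after the factors $n^{\alpha}$ and $n^{-\alpha}$ cancel,
\[
\prequadvar{n^{-\alpha/2} Y_n}(t) = \int_0^t \diag{\genQ^\transpose K_n(s)}\,\dd s - \int_0^t \genQ^\transpose \diag{K_n(s)}\,\dd s - \int_0^t \diag{K_n(s)} \genQ\,\dd s.
\]
Each integrand is linear in $K_n$, so by the uoc convergence $\int_0^t K_n(s)\,\dd s \to \pi t$ in probability (the case $\eps = \alpha/2$ of Theorem \ref{thm:ergodic}) together with $\genQ^\transpose \pi = 0$, the first integral vanishes in the limit while the other two converge to $-(\genQ^\transpose \diag{\pi} + \diag{\pi}\genQ) t$, which is exactly \eqref{eq:Dynkinlimitquadvar}. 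Since this limit is deterministic and continuous, the MCLT delivers weak convergence of $n^{-\alpha/2} Y_n$ to a Brownian motion $Y$ with this predictable quadratic variation.

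For the second assertion, I would rewrite the scaled occupation measure in terms of the Dynkin martingale. Using \eqref{eq:FtransposeQtransposeKn} to write $K_n(s) - \pi = -F^\transpose \genQ^\transpose K_n(s)$, and then substituting the definition \eqref{eq:dynkin} of $Y_n$ (which, since the generator is $n^{\alpha}\genQ$, gives $\int_0^t n^{\alpha}\genQ^\transpose K_n(s)\,\dd s = K_n(t) - K_n(0) - Y_n(t)$), I obtain
\[
n^{\alpha/2}\int_0^t (K_n(s) - \pi)\,\dd s = F^\transpose n^{-\alpha/2} Y_n(t) - F^\transpose n^{-\alpha/2}\left( K_n(t) - K_n(0)\right).
\]
Because $K_n$ is bounded by $1$, the last term converges uoc to the zero process, so by the converging-together lemma the left-hand side shares the weak limit of $F^\transpose(n^{-\alpha/2} Y_n)$. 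The continuous mapping theorem, applied to the continuous linear map $y \mapsto F^\transpose y$, then yields convergence to $X := F^\transpose Y$, again a Brownian motion, with $\prequadvar{X}(t) = F^\transpose \prequadvar{Y}(t) F$. Substituting \eqref{eq:Dynkinlimitquadvar} and applying \eqref{eq:quadvardevmat} gives
\[
\prequadvar{X}(t) = -F^\transpose\left(\genQ^\transpose \diag{\pi} + \diag{\pi}\genQ\right) F\, t = \left(\diag{\pi} D + D^\transpose \diag{\pi}\right) t,
\]
which is \eqref{eq:Xlimitquadvar}.

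I expect the main obstacle to be the rigorous verification that the predictable quadratic variation of $n^{-\alpha/2} Y_n$ converges, in the correct mode (uoc in probability), to the stated deterministic limit; this is the only genuinely new computation relative to Theorem \ref{thm:ergodic}, and it is what drives the MCLT and distinguishes the critical scaling from the subcritical one. Once the martingale limit is in hand, the remaining steps — the algebraic reduction to $F^\transpose n^{-\alpha/2} Y_n$, the discarding of the asymptotically negligible boundary term, and the quadratic-variation computation through \eqref{eq:quadvardevmat} — amount to essentially routine bookkeeping.
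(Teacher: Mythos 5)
Your proposal is correct and follows essentially the same route as the paper's own proof: MCLT for $n^{-\alpha/2}Y_n$ via the ergodic limit of the predictable quadratic variation (with the first integral killed by $\genQ^{\transpose}\pi = 0$), then transfer to the occupation measure through the Dynkin decomposition, the negligible boundary term $n^{-\alpha/2}(K_n(t)-K_n(0))$, the identity \eqref{eq:FtransposeQtransposeKn}, and finally \eqref{eq:quadvardevmat} for the limiting quadratic variation. The only difference is presentational: you spell out explicitly the algebraic rearrangement that the paper compresses into ``then $-n^{\alpha/2}\int_0^t \genQ^{\transpose}K_n(s)\,\dd s$ must converge weakly to $Y$ as well.''
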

\begin{proof}
We know from the previous proof that the Dynkin martingale $Y_{n}$ satisfies
\begin{align*}
&\prequadvar{ n^{-\alpha / 2} Y_{n} } \left( t \right) \\
&\quad {} = n^{-\alpha} \prequadvar{Y_{n} } \left( t \right) \\
&\quad {} = \int_{0}^{t} \diag{\genQ^{\transpose} K_{n} \left( s \right)} \, \dd s - \int_{0}^{t} \genQ^{\transpose} \diag{K_{n} \left( s \right)} \, \dd s - \int_{0}^{t} \diag{K_{n} \left( s \right)} \genQ \, \dd s.
\end{align*}
It follows from Theorem \ref{thm:ergodic} that $\prequadvar{ n^{-\alpha / 2} Y_{n} }$ converges to
\begin{align*}
&\int_{0}^{t} \diag{\genQ^{\transpose} \pi} \, \dd s - \int_{0}^{t} \genQ^{\transpose} \diag{\pi} \, \dd s - \int_{0}^{t} \diag{\pi} \genQ \, \dd s \\
&\quad = - \int_{0}^{t} \genQ^{\transpose} \diag{\pi} \, \dd s - \int_{0}^{t} \diag{\pi} \genQ \, \dd s \\
&\quad = - \left( \genQ^{\transpose} \diag{\pi} + \diag{\pi} \genQ \right) t
\end{align*}
uoc in probability. The penultimate equality is based on the fact that $\pi^{\transpose} \genQ = 0$.
Using the same arguments as in the previous proof, we conclude that $n^{-\alpha / 2} Y_{n}$ converges weakly to a Brownian motion $Y$ and that its compensator $\prequadvar{Y}$ is given by Eq.\ \eqref{eq:Dynkinlimitquadvar}.

Then the process $- n^{\alpha/2} \int_{0}^{t} \genQ^{\transpose} K_{n} \left( s \right) \, \dd s$ must converge weakly to $Y$ as well. It follows that the process
\begin{align*}
n^{\alpha / 2} \int_{0}^{t} \left( K_{n} \left( s \right) - \pi \right) \, \dd s = - n^{\alpha/2} \int_{0}^{t} F^{\transpose} \genQ^{\transpose} K_{n} \left( s \right) \, \dd s
\end{align*}
converges weakly to a Brownian motion $X$ with
\begin{align*}
\prequadvar{X} \left( t \right)
= F^{\transpose} \left( - \left( \genQ^{\transpose} \diag{\pi} + \diag{\pi} \genQ \right) t \right) F 
= \left( \diag{\pi} D + D^{\transpose} \diag{\pi} \right) t.
\end{align*}
For a justification of the last equality, see Eq.\ \eqref{eq:quadvardevmat}.
\end{proof}

\section{Main result}
\label{sec:markovintegral}
We have settled weak convergence of the Dynkin martingale and the state occupation measure of a Markov chain in Theorem \ref{thm:markovchainfclt}. Motivated by the analysis of modulated queueing systems, we are also interested in the convergence of stochastic integrals with respect to the Dynkin martingale and the state occupation measure of a Markov chain.

As mentioned before, the convergence of stochastic integrals with respect to semimartingales is a very delicate subject. For concreteness, suppose that $X_{n}$ is some semimartingale and $H_{n}$ is a suitable integrand. Then, even when $H_{n}$ and $X_{n}$ are well-behaved deterministic processes converging uniformly to the zero process, the stochastic integral $H_{n} \stdot X_{n}$ may not converge as $n \to \infty$. 

Nevertheless, there are two well-known cases in which the analysis simplifies considerably. The first case is when $X_{n}$ is a martingale. The second case (partly covering the first) is when $X_{n}$ satisfies the so-called P-UT condition. The term P-UT stands for `Predictably Uniformly Tight'; see \cite[Def.\ VI.6.1]{js2003} and \cite{KP1991} for definitions and some explanation. 

When $X_{n}$ is the Dynkin martingale, we find ourselves in a situation that is covered by both the first case and the second case. Then we may use fairly standard arguments to establish convergence of the stochastic integral. 

However, when integrating against the state occupation measure, neither the first nor the second case applies. We will get around this problem by restricting the integrands to be processes of finite variation which converge in a specific way. Under this restriction, we can exploit properties of both the Dynkin martingale and the state indicator function to obtain weak convergence of the stochastic integral with respect to the state occupation measure.

We start with some assumptions and notation, following mainly \cite[p.\ 204]{js2003}. Let $X$ be a $d$-dimensional locally square-integrable martingale with respect to a filtration $\mathds{F}$ (as defined in \cite{js2003}). For simplicity, we assume that
\begin{align*}
\prequadvar{X} \left( t \right)
&= \int_{0}^{t} C \left( s \right) \, \dd s
\end{align*}
for a predictable process $C$ taking values in the set of all symmetric nonnegative $d \times d$ matrices. We denote by $L^{2}_{\mathrm{loc}} \left( X \right)$ the set of predictable processes $H$ taking values in $\mathbb{R}^{k \times d}$ such that the process
\begin{align*}
\int_{0}^{t} H \left( s \right) C \left( s \right) H \left( s \right)^{\transpose} \, \dd s
\end{align*}
is locally integrable.

Under this set of assumptions, \cite[Th.\ VI.6.4]{js2003} guarantees the existence of the stochastic integral $H \stdot X$ for $H \in L^{2}_{\mathrm{loc}} \left( X \right)$ and shows that $H \stdot X$ is a locally square-integrable martingale with
\begin{align*}
\prequadvar{H \stdot X} \left( t \right)
&= \int_{0}^{t} H \left( s \right) C \left( s \right) H \left( s \right)^{\transpose} \, \dd s.
\end{align*}
The following theorem describes the asymptotic behavior of this stochastic integral when $X$ is given by the Dynkin martingale of a Markov chain. 

\begin{theorem}
\label{thm:stochintconvmart}
Let $\alpha > 0$ and $d \in \mathbb{N}$. Let $\genQ$ be a $d \times d$ irreducible generator matrix and let $J_{n}$ be a continuous-time Markov chain with state space $\left\lbrace 1 , \dotsc , d \right\rbrace$, generator matrix $n^{\alpha} \genQ$, and state indicator function $K_{n}$. Let $Y_{n}$ denote the Dynkin martingale associated with $J_{n}$. Let $H_{n}$ be a c\`{a}dl\`{a}g adapted process such that $H_{n}^{-} \in L^{2}_{\mathrm{loc}} \left( Y_{n} \right)$, where $H_{n}^{-} \left( t \right) = H_{n} \left( t- \right)$. Assume that $H_{n}$ converges to $H$ uoc in probability, where $H$ is a deterministic continuous function. 
Then, for $n \to \infty$, the stochastic integral $H_{n}^{-} \stdot n^{- \alpha / 2} Y_{n}$ converges weakly to the stochastic integral $H \stdot Y$, with $Y$ being a Brownian motion whose predictable quadratic variation process is given by Eq.\ \eqref{eq:Dynkinlimitquadvar}.
\end{theorem}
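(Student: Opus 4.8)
The plan is to combine the weak convergence of the Dynkin martingales established in Theorem~\ref{thm:markovchainfclt} with a stability theorem for stochastic integrals driven by martingales. The crucial observation is that $n^{-\alpha/2} Y_{n}$ is, for each $n$, a locally square-integrable martingale, so we are squarely in the first (and second) of the two favorable cases discussed above. The standard machinery for convergence of stochastic integrals against good sequences of semimartingales—specifically the results of Jakubowski, M\'emin, and Pag\`es, packaged as \cite[Th.\ VI.6.22]{js2003}—applies precisely when the integrators satisfy the P-UT condition and one has joint weak convergence of integrand and integrator. Since every sequence of locally square-integrable martingales whose predictable quadratic variations converge is automatically P-UT, the Dynkin martingales qualify.

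The key steps, in order, are as follows. First I would record that $n^{-\alpha/2} Y_{n} \Rightarrow Y$ in the Skorokhod topology, which is exactly the first assertion of Theorem~\ref{thm:markovchainfclt}, with $Y$ a Brownian motion whose compensator is given by Eq.~\eqref{eq:Dynkinlimitquadvar}. Second, I would verify that the sequence $(n^{-\alpha/2} Y_{n})_{n}$ satisfies the P-UT condition; because each $\langle n^{-\alpha/2} Y_{n} \rangle$ is continuous and converges uoc in probability to the deterministic limit $-(\genQ^{\transpose}\diag{\pi} + \diag{\pi}\genQ)t$, the predictable quadratic variations are uniformly controlled on compacts, which yields P-UT. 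Third, I would establish the joint weak convergence $(H_{n}, n^{-\alpha/2} Y_{n}) \Rightarrow (H, Y)$. This is where the hypothesis that $H$ is a \emph{deterministic} continuous function does the decisive work: since $H_{n} \to H$ uoc in probability with $H$ deterministic, the pair converges jointly (marginal weak convergence of $n^{-\alpha/2}Y_{n}$ plus convergence in probability of $H_{n}$ to a constant limit gives joint convergence), so no subtle dependence between integrand and integrator needs to be tracked. Fourth, I would apply the stochastic-integral continuity theorem to conclude $H_{n}^{-} \stdot n^{-\alpha/2} Y_{n} \Rightarrow H \stdot Y$. Finally, I would note that $H \in L^{2}_{\mathrm{loc}}(Y)$, so the limiting integral $H \stdot Y$ is well defined in the sense described before the theorem.

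The main obstacle I expect is verifying that the hypotheses of the invoked convergence theorem are met exactly, rather than any single hard estimate. Concretely, the continuity theorems for stochastic integrals require care about the mode of convergence of the integrands—one needs convergence of $H_{n}$ together with its left-limit version $H_{n}^{-}$ in a topology compatible with the integrator convergence, and one must ensure the limit integrand is predictable and sufficiently integrable. The deterministic continuity of $H$ makes the passage to left limits harmless, since $H^{-} = H$ for a continuous function, and it trivializes the joint convergence; thus the genuine work is confined to checking P-UT and assembling the joint convergence correctly. I would therefore structure the write-up so that the P-UT verification and the joint convergence claim are stated explicitly as the two substantive lemmas feeding into the final application of \cite[Th.\ VI.6.22]{js2003}.
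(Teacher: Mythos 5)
Your proposal is correct and follows essentially the same route as the paper: weak convergence of $n^{-\alpha/2} Y_{n}$ from Theorem \ref{thm:markovchainfclt}, joint convergence of integrand and integrator via the deterministic continuous limit $H$, verification of P-UT, and an application of \cite[Th.\ VI.6.22]{js2003}. The only cosmetic difference is in the P-UT step: the paper invokes \cite[Cor.\ VI.6.29]{js2003} using that the jumps of $n^{-\alpha/2} Y_{n}$ are bounded by $1$ together with the weak convergence, whereas you argue via control of the predictable brackets; both are valid here.
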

\begin{proof}
Recall that $L^{2}_{\mathrm{loc}} \left( Y_{n} \right)$ is a collection of predictable processes. This is why $H_{n}^{-}$ is used as an integrand rather than $H_{n}$: the process $H_{n}^{-}$ is predictable, whereas $H_{n}$ may not be predictable. Since $H$ is deterministic and continuous, it is obviously predictable, so there is no need to use $H^{-}$ in the limiting stochastic integral.

We know from Theorem \ref{thm:markovchainfclt} that $n^{- \alpha / 2} Y_{n}$ converges weakly to a Brownian motion $Y$ with $\prequadvar{Y}$ satisfying Eq.\ \eqref{eq:Dynkinlimitquadvar}. Because $H$ is a deterministic continuous function, we obtain weak convergence of $\left( H_{n}^{-} , n^{- \alpha / 2} Y_{n} \right)$ to $\left( H , Y \right)$.

We would like to apply \cite[Th.\ VI.6.22]{js2003} to show weak convergence of $ H_{n}^{-} \stdot n^{- \alpha / 2} Y_{n}$ to $H \stdot Y$. To be able to apply this result, we need to verify that the sequence of martingales $n^{- \alpha / 2} Y_{n}$ has the P-UT property. The validity of this property follows from \cite[Cor.\ VI.6.29]{js2003}, because $n^{- \alpha / 2} Y_{n}$ is a martingale converging weakly to $Y$ and its jumps are bounded by $1$. Thus, \cite[Th.\ VI.6.22]{js2003} gives us the weak convergence of $ H_{n}^{-} \stdot n^{- \alpha / 2} Y_{n}$ to $H \stdot Y$.
\end{proof}

Now we have conditions under which the stochastic integral with respect to the Dynkin martingale converges weakly. This is exploited in the proof of the next theorem, which states that, under the proviso that the integrand converges nicely, certain stochastic integrals with respect to the state occupation measure converge weakly. The proof of this result relies on showing that the stochastic integral with respect to the state occupation measure is asymptotically equivalent to the same stochastic integral with respect to the Dynkin martingale. As we already have established weak convergence of the stochastic integral with respect to the Dynkin martingale in the previous theorem, we immediately get weak convergence of the stochastic integrals with respect to the state occupation measure.

\begin{theorem}
\label{thm:stochintstateind}
Impose the conditions of Theorem \ref{thm:stochintconvmart}, together with the extra requirement that each entry of $n^{- \alpha / 2} H_{n}$ is a finite variation process whose total variation process converges to the zero process uoc in probability. Then the stochastic process
\begin{align*}
\int_{0}^{t} H_{n} \left( s \right) n^{\alpha / 2} \genQ^{\transpose} K_{n} \left( s \right) \, \dd s
\end{align*}
converges weakly to the stochastic integral $H \stdot Y$.
\end{theorem}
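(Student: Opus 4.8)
The plan is to reduce everything to the integral with respect to the Dynkin martingale, whose limit is already supplied by Theorem~\ref{thm:stochintconvmart}. The starting point is the defining relation of $Y_n$ from Eq.~\eqref{eq:dynkin}, applied to $J_n$ (generator $n^\alpha\genQ$), which gives
\begin{align*}
n^{-\alpha/2}Y_n(t) = n^{-\alpha/2}\left(K_n(t)-K_n(0)\right) - n^{\alpha/2}\int_0^t \genQ^\transpose K_n(s)\,\dd s.
\end{align*}
Integrating the c\`{a}gl\`{a}d process $H_n^-$ against $n^{-\alpha/2}Y_n$, and observing that the absolutely continuous part does not distinguish $H_n^-$ from $H_n$ under $\dd s$, I would obtain the pathwise identity
\begin{align*}
\int_0^t H_n(s)\,n^{\alpha/2}\genQ^\transpose K_n(s)\,\dd s &= R_n(t) - \left(H_n^-\stdot n^{-\alpha/2}Y_n\right)(t), \\
R_n(t) &:= n^{-\alpha/2}\int_0^t H_n(s-)\,\dd K_n(s).
\end{align*}
Since Theorem~\ref{thm:stochintconvmart} already gives $H_n^-\stdot n^{-\alpha/2}Y_n \Rightarrow H\stdot Y$, the whole argument reduces to showing that $R_n \to 0$ uoc in probability.

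The hard part is exactly this control of $R_n$: the integrator $K_n$ has total variation of order $n^\alpha$ on compacts (it is pure-jump with of order $n^\alpha$ jumps), so the crude bound on $n^{-\alpha/2}\int_0^t H_n(s-)\,\dd K_n(s)$ blows up, and this is precisely why the extra hypothesis on the total variation of $n^{-\alpha/2}H_n$ is needed. My approach is to transfer the variation off $K_n$ by componentwise integration by parts for c\`{a}dl\`{a}g finite-variation processes (both $K_n$ and $H_n$ are of finite variation, the former because it is pure-jump with finitely many jumps on compacts). This rewrites $R_n$ as a boundary term $n^{-\alpha/2}[H_n(t)K_n(t)-H_n(0)K_n(0)]$, a Stieltjes integral $-n^{-\alpha/2}\int_0^t (\dd H_n(s))\,K_n(s-)$, and a jump sum $-n^{-\alpha/2}\sum_{0<s\le t}\Delta H_n(s)\Delta K_n(s)$. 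For the boundary term I would use $\norm{K_n}\le 1$ together with the stochastic boundedness of $\sup_{s\le T}\norm{H_n(s)}$ (which follows from $H_n\to H$ uoc with $H$ continuous) to get a bound of order $n^{-\alpha/2}$. For the Stieltjes integral and the jump sum I would bound the integrand and the jump factors of $K_n$ by constants (again $\norm{K_n}\le 1$ and $\norm{\Delta K_n}\le C$) and dominate both, up to a constant depending only on $d$ and $k$, by the total variation of $n^{-\alpha/2}H_n$ on $[0,t]$; these are the Stieltjes estimates collected in Section~\ref{sec:boundedvariation}. By hypothesis this total variation vanishes uoc in probability, so $R_n\to 0$ uoc in probability.

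It then remains to assemble the pieces by a converging-together (Slutsky) argument: negation is continuous, so $-H_n^-\stdot n^{-\alpha/2}Y_n\Rightarrow -H\stdot Y$, and since $R_n\to 0$ uniformly on compacts in probability (and the uniform metric dominates the Skorokhod metric), the sum $R_n - H_n^-\stdot n^{-\alpha/2}Y_n$ converges weakly to $-H\stdot Y$. One subtlety worth flagging is the sign: the identity produces $-H\stdot Y$ rather than $H\stdot Y$. This is harmless because $Y$ is a Brownian motion and $H$ is deterministic, so $H\stdot Y$ is a centered Gaussian process whose law is invariant under a global sign change; hence $-H\stdot Y$ and $H\stdot Y$ have the same distribution, and the stated weak convergence to $H\stdot Y$ follows.
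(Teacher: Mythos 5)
Your proposal is correct, and its skeleton coincides with the paper's: rewrite the target process via the Dynkin decomposition as $H_{n}^{-} \stdot n^{-\alpha/2} K_{n} - H_{n}^{-} \stdot n^{-\alpha/2} Y_{n}$, invoke Theorem \ref{thm:stochintconvmart} for the martingale part, and kill the remainder $R_{n} = H_{n}^{-} \stdot n^{-\alpha/2} K_{n}$ using the total-variation hypothesis. Where you genuinely diverge is in the mechanism for bounding $R_{n}$. The paper works entrywise and exploits the special structure of the integrator: each entry $\ind{J_{n}(t)=j}$ has alternating jumps of size $+1$ and $-1$, and Lemma \ref{lem:alternatingjumps} turns this into the bound $v_{y}(T) + \sup_{t \leq T}\abs{y(t)}$ with $y = n^{-\alpha/2}H_{n}^{-}(i,j;\cdot)$. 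You instead integrate by parts for c\`{a}dl\`{a}g finite-variation paths, transferring the variation from $K_{n}$ onto $H_{n}$; the boundary term, the integral $\int K_{n}(s-)\,\dd H_{n}(s)$, and the jump-covariation sum are then each controlled by $\norm{K_{n}} \leq 1$, $\norm{\Delta K_{n}}$ bounded, the stochastic boundedness of $H_{n}$, and the vanishing total variation of $n^{-\alpha/2}H_{n}$. The resulting bound is of the same form (a constant times the total variation plus the supremum of $n^{-\alpha/2}H_{n}$), so neither route needs stronger hypotheses, but yours is slightly more general: it uses only that $K_{n}$ is a bounded finite-variation process with bounded jumps, not that it is $\left\lbrace 0,1 \right\rbrace$-valued with alternating unit jumps, and so dispenses with Lemma \ref{lem:alternatingjumps} entirely. (Your only slight misattribution is calling these ``the Stieltjes estimates collected in Section \ref{sec:boundedvariation}''; that section does not actually contain the bound $\abs{\int g\,\dd f} \leq \sup\abs{g}\, v_{f}$, though it is elementary.) You also make explicit the sign point --- the decomposition yields $-H \stdot Y$ in the limit, which has the same law as $H \stdot Y$ since $Y$ is a centered Gaussian martingale and $H$ is deterministic --- which the paper passes over silently.
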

\begin{proof}
First, recall the form of $n^{- \alpha /2} Y_{n}$ (cf.\ Eq.\ \eqref{eq:dynkin}) and observe that
\begin{align*}
&\int_{0}^{t} H_{n} \left( s \right) n^{\alpha / 2} \genQ^{\transpose} K_{n} \left( s \right) \, \dd s \\
&\quad {} = \int_{0}^{t} H_{n}^{-} \left( s \right) n^{\alpha / 2} \genQ^{\transpose} K_{n} \left( s \right) \, \dd s \\
&\quad {} = \int_{0}^{t} H_{n}^{-} \left( s \right) \, \dd n^{- \alpha / 2} K_{n} \left( s \right) - \int_{0}^{t} H_{n}^{-} \left( s \right) \, \dd n^{- \alpha / 2} Y_{n} \left( s \right).
\end{align*}
Theorem \ref{thm:stochintconvmart} asserts that $H_{n}^{-} \stdot n^{- \alpha / 2} Y_{n}$ converges weakly to $H \stdot Y$, so it suffices to prove that $H_{n}^{-} \stdot n^{- \alpha / 2} K_{n}$ converges to the zero process uoc in probability. To this end, it suffices to prove that 
\begin{align*}
\int_{0}^{t} H_{n}^{-} \left( i,j ; s \right) \, \dd n^{- \alpha / 2} K_{n} \left( j ; s \right)
&= \int_{0}^{t} n^{- \alpha / 2} H_{n}^{-} \left( i,j ; s \right) \, \dd \ind{J_{n} \left( s \right) = j}
\end{align*}
converges to the zero process uoc in probability.

Denote the total variation process of $n^{- \alpha / 2} H_{n}^{-} \left( i,j ; t \right)$ by $V_{n} \left( i,j ; t \right)$; it is clearly bounded by the total variation process of $n^{- \alpha / 2} H_{n} \left( i,j ; t \right)$. 
Now the crucial observation is that the process $\ind{J_{n} \left( t \right) = j}$ consists of alternating jumps $+1$ and $-1$, which implies that
\begin{align*}
&\sup_{0 \leq s \leq t} \abs{ \int_{0}^{s} n^{- \alpha / 2} H_{n}^{-} \left( i,j ; r \right) \, \dd \ind{J_{n} \left( r \right) = j} } \\
&\quad {} \leq V_{n} \left( i,j ; t \right) + \sup_{0 \leq s \leq t} \abs{n^{- \alpha / 2} H_{n}^{-} \left( i,j ; s \right)}.
\end{align*}
The validity of this inequality is a consequence of Lemma \ref{lem:alternatingjumps}.
Because both $n^{- \alpha / 2} H_{n}^{-}$ and $V_{n}$ converge to the zero process uoc in probability, also $H_{n}^{-} \stdot n^{- \alpha / 2} K_{n}$ converges to the zero process uoc in probability, as required.
\end{proof}

The next theorem concerns weak convergence of a vector of stochastic integrals. For each of these stochastic integrals, the integrator is the scaled and centered state occupation measure from Theorem \ref{thm:markovchainfclt}. The proof of weak convergence in this case follows the proofs of Theorem \ref{thm:stochintconvmart} and Theorem \ref{thm:stochintstateind} quite closely.

\begin{theorem}
\label{thm:vectorstateoccupationintegral}
Impose the conditions of Theorem \ref{thm:stochintconvmart} and define
\begin{align*}
G_{n} \left( t \right) &= n^{\alpha/2} \int_{0}^{t} \left( K_{n} \left( s \right) - \pi \right) \, \dd s.
\end{align*}
For some fixed $m \in \mathbb{N}$, let $H_{1,n} , \dotsc , H_{m,n}$ be c\`{a}dl\`{a}g, adapted processes such that $H_{k,n}^{-} \in L^{2}_{\mathrm{loc}} \left( Y_{n} \right)$, where $H_{k,n}^{-} \left( t \right) = H_{k,n} \left( t- \right)$. Assume that each $H_{k,n}^{-}$ converges to $H_{k}$ uoc in probability, where $H_{k}$ is a deterministic, continuous function. Additionally, assume that each entry of $n^{-\alpha/2} H_{k,n}$ is a finite variation process whose total variation process converges to the zero process uoc in probability. Then, for $n \to \infty$, the vector of stochastic integrals
\begin{align}
\left( H_{1,n}^{-} \stdot G_{n} , \dotsc , H_{m,n}^{-} \stdot G_{n} \right)
\label{eq:vectorstochintprelimit}
\end{align}
converges weakly to the vector of stochastic integrals
\begin{align}
\left( H_{1} \stdot X , \dotsc , H_{m} \stdot X \right),
\label{eq:vectorstochintlimit}
\end{align}
with $X$ being a Brownian motion whose predictable quadratic variation process is given by Eq.\ \eqref{eq:Xlimitquadvar}.
\end{theorem}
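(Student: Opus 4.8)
The plan is to mirror the two-step strategy of Theorems~\ref{thm:stochintconvmart} and~\ref{thm:stochintstateind}, the genuinely new ingredient being that the whole \emph{vector} must converge jointly, not merely componentwise. First I would rewrite each integrator in martingale-plus-remainder form. Recall from Eq.~\eqref{eq:FtransposeQtransposeKn} that $K_{n} \left( s \right) - \pi = - F^{\transpose} \genQ^{\transpose} K_{n} \left( s \right)$, and from Eq.~\eqref{eq:dynkin} that $n^{\alpha} \genQ^{\transpose} K_{n} \left( s \right) \, \dd s = \dd K_{n} \left( s \right) - \dd Y_{n} \left( s \right)$. Combining these gives $\dd G_{n} \left( s \right) = - F^{\transpose} \left( \dd n^{- \alpha / 2} K_{n} \left( s \right) - \dd n^{- \alpha / 2} Y_{n} \left( s \right) \right)$, so for each $k$, using that $F$ is constant (whence $\left( H_{k,n} F^{\transpose} \right)^{-} = H_{k,n}^{-} F^{\transpose}$),
\begin{align*}
H_{k,n}^{-} \stdot G_{n} = -\left( H_{k,n} F^{\transpose} \right)^{-} \stdot n^{- \alpha / 2} K_{n} + \left( H_{k,n} F^{\transpose} \right)^{-} \stdot n^{- \alpha / 2} Y_{n}.
\end{align*}

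Second, I would dispose of the $K_{n}$-terms exactly as in the proof of Theorem~\ref{thm:stochintstateind}. Each entry of $n^{- \alpha / 2} H_{k,n} F^{\transpose}$ is a finite linear combination, with constant coefficients drawn from $F$, of the entries of $n^{- \alpha / 2} H_{k,n}$; hence its total variation process is dominated by a constant multiple of the total variations of those entries and therefore converges to the zero process uoc in probability. The alternating-jump estimate of Lemma~\ref{lem:alternatingjumps} then forces each $\left( H_{k,n} F^{\transpose} \right)^{-} \stdot n^{- \alpha / 2} K_{n}$ to the zero process uoc in probability. Since there are only finitely many such remainder terms and each converges to $0$ in probability, they converge jointly to the zero vector in probability.

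Third---and this is where the vector statement really demands an argument---I would obtain \emph{joint} weak convergence of the martingale terms by stacking. All $m$ integrals share the single integrator $n^{- \alpha / 2} Y_{n}$, so assembling the integrands $H_{1,n} F^{\transpose} , \dotsc , H_{m,n} F^{\transpose}$ into one tall matrix-valued process $\tilde{H}_{n}$ turns the vector $\left( \left( H_{1,n} F^{\transpose} \right)^{-} \stdot n^{- \alpha / 2} Y_{n} , \dotsc \right)$ into the single stochastic integral $\tilde{H}_{n}^{-} \stdot n^{- \alpha / 2} Y_{n}$, whose block structure recovers the vector. The hypotheses of Theorem~\ref{thm:stochintconvmart} are inherited by $\tilde{H}_{n}$: it is càdlàg adapted with $\tilde{H}_{n}^{-} \in L^{2}_{\mathrm{loc}} \left( Y_{n} \right)$ and converges uoc in probability to the deterministic continuous limit $\tilde{H}$ built from the $H_{k} F^{\transpose}$. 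Applying Theorem~\ref{thm:stochintconvmart} directly to $\tilde{H}_{n}$ therefore yields $\tilde{H}_{n}^{-} \stdot n^{- \alpha / 2} Y_{n} \Rightarrow \tilde{H} \stdot Y$, i.e.\ joint weak convergence of the martingale terms to $\left( \left( H_{1} F^{\transpose} \right) \stdot Y , \dotsc , \left( H_{m} F^{\transpose} \right) \stdot Y \right)$. This stacking device is precisely what upgrades componentwise convergence (which is all the previous theorems would give term by term) to the required joint convergence, and it works only because the integrator is common to all $m$ integrals.

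Finally, I would identify the limit and combine. Taking $X = F^{\transpose} Y$ realizes a Brownian motion whose predictable quadratic variation is $F^{\transpose} \prequadvar{Y} F = \left( \diag{\pi} D + D^{\transpose} \diag{\pi} \right) t$ by Eq.~\eqref{eq:quadvardevmat}, which matches Eq.~\eqref{eq:Xlimitquadvar}; moreover $\left( H_{k} F^{\transpose} \right) \stdot Y = H_{k} \stdot X$. A Slutsky-type argument then combines the jointly negligible $K_{n}$-terms with the jointly convergent martingale terms, giving weak convergence of the vector in Eq.~\eqref{eq:vectorstochintprelimit} to the vector in Eq.~\eqref{eq:vectorstochintlimit}. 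The one point requiring sustained care throughout is the joint distribution; everything else is a faithful repetition of the scalar arguments already carried out in Theorems~\ref{thm:stochintconvmart} and~\ref{thm:stochintstateind}.
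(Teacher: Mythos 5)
Your proposal is correct and follows essentially the same route as the paper: the identical martingale-plus-remainder decomposition via Eq.~\eqref{eq:FtransposeQtransposeKn}, the same disposal of the $K_{n}$-terms through Lemma~\ref{lem:alternatingjumps} and linearity of total variation, and the same identification $X = F^{\transpose} Y$ via Eq.~\eqref{eq:quadvardevmat}. The only cosmetic difference is in the joint-convergence step: you stack the integrands into one matrix-valued process and invoke Theorem~\ref{thm:stochintconvmart} once, whereas the paper duplicates the integrator $m$ times and applies \cite[Th.\ VI.6.22]{js2003} to the resulting vector; both devices exploit the common integrator in the same way and are equally valid.
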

\begin{proof}
We know from Eq.\ \eqref{eq:FtransposeQtransposeKn} that
\begin{align*}
G_{n} \left( t \right) &= n^{\alpha/2} \int_{0}^{t} \left( K_{n} \left( s \right) - \pi \right) \, \dd s = - \int_{0}^{t} F^{\transpose} n^{\alpha/2} \genQ^{\transpose} K_{n} \left( s \right) \, \dd s,
\end{align*}
so
\begin{align*}
\left( H_{k,n}^{-} \stdot G_{n} \right) \left( t \right) 
&= \int_{0}^{t} H_{k,n}^{-} \left( s \right) n^{\alpha/2} \left( K_{n} \left( s \right) - \pi \right) \, \dd s \\
&= - \int_{0}^{t} H_{k,n}^{-} \left( s \right) F^{\transpose} n^{\alpha/2} \genQ^{\transpose} K_{n} \left( s \right) \, \dd s.
\end{align*}
It follows from the proof of Theorem \ref{thm:stochintstateind} that the last integral equals
\begin{align*}
\int_{0}^{t} H_{k,n}^{-} \left( s \right) F^{\transpose} \, \dd n^{-\alpha/2} Y_{n} \left( s \right) + R_{k,n} \left( t \right),
\end{align*}
where
\begin{align*}
R_{k,n} \left( t \right) 
&= - \int_{0}^{t} H_{k,n}^{-} \left( s \right) F^{\transpose} \, \dd n^{-\alpha/2} K_{n} \left( s \right).
\end{align*}
Clearly, $H_{k,n}^{-} F^{\transpose}$ is a process in $L^{2}_{\mathrm{loc}} \left( Y_{n} \right)$ and converges to $H_{k} F^{\transpose}$ uoc in probability. Moreover, each entry of $n^{-\alpha/2} H_{k,n} F^{\transpose}$ is a finite variation process whose total variation process converges to the zero process uoc in probability. Consequently, the proof of Theorem \ref{thm:stochintstateind} also gives us convergence of $R_{k,n}$ to the zero process uoc in probability.

Now observe that the vector of stochastic integrals in Eq.\ \eqref{eq:vectorstochintprelimit} equals
\begin{align*}
\left( H_{1,n}^{-} F^{\transpose} \stdot n^{-\alpha/2} Y_{n} + R_{1,n}, \dotsc , H_{m,n}^{-} F^{\transpose} \stdot n^{-\alpha/2} Y_{n} + R_{m,n} \right).
\end{align*}
Due to the processes $R_{k,n}$ converging to the zero process, it suffices to show that
\begin{align}
\left( H_{1,n}^{-} F^{\transpose} \stdot n^{-\alpha/2} Y_{n}, \dotsc , H_{m,n}^{-} F^{\transpose} \stdot n^{-\alpha/2} Y_{n} \right)
\label{eq:vectorstochintsufficient}
\end{align}
converges weakly to the limiting vector of stochastic integrals in Eq.\ \eqref{eq:vectorstochintlimit}.

To show weak convergence of the vector in Eq.\ \eqref{eq:vectorstochintsufficient}, we may follow the proof of Theorem \ref{thm:stochintconvmart}. Recall that $Y_{n}$ denotes the Dynkin martingale and that  $n^{-\alpha/2} Y_{n}$ converges weakly to a Brownian motion $Y$, whose predictable quadratic variation process is given by Eq.\ \eqref{eq:Dynkinlimitquadvar}. For notational convenience, we define $Y_{n}^{\left( k \right)} = Y_{n}$ and $Y^{\left( k \right)} = Y$ for $k = 1 , \dotsc , m$. Because the processes $H_{k,n}^{-} F^{\transpose}$ converge uoc in probability to the deterministic functions $H_{k} F^{\transpose}$, we get weak convergence of
\begin{align*}
\left( H_{1,n}^{-} F^{\transpose} , \dotsc , H_{m,n}^{-} F^{\transpose} , n^{-\alpha/2} Y_{n}^{\left( 1 \right)} , \dotsc , n^{-\alpha/2} Y_{n}^{\left( m \right)} \right)
\end{align*}
to
\begin{align*}
\left( H_{1}^{-} F^{\transpose} , \dotsc , H_{m}^{-} F^{\transpose} , Y^{\left( 1 \right)} , \dotsc , Y^{\left( m \right)} \right).
\end{align*}
As we have shown in the proof of Theorem \ref{thm:stochintconvmart}, the sequence of martingales $n^{-\alpha/2} Y_{n}$ has the P-UT property. Then \cite[Th.\ VI.6.22]{js2003} implies the weak convergence of the vector of stochastic integrals in Eq.\ \eqref{eq:vectorstochintsufficient} to
\begin{align*}
\left( H_{1} F^{\transpose} \stdot Y, \dotsc , H_{m} F^{\transpose} \stdot Y \right) = \left( H_{1} \stdot X , \dotsc , H_{m} \stdot X \right),
\end{align*}
where $X = F^{\transpose} Y$. Note that $X$ is a Brownian motion and that its predictable quadratic variation process is indeed given by Eq.\ \eqref{eq:Xlimitquadvar}, as required.
\end{proof}


\section{Functions of bounded variation}
\label{sec:boundedvariation}
We say that a function $x \colon \left[ 0,\infty \right) \to \mathbb{R}$ is of bounded variation if
\begin{align*}
v_{x} \left( t \right)
&= \sup \sum_{k=1}^{n} \abs{ x \left( t_{k} \right) - x \left( t_{k-1} \right) } < \infty
\end{align*}
for each fixed $t > 0$, where the supremum runs over all partitions $0 \leq t_{0} < t_{1} < \dotsc < t_{n} \leq t$. If $x$ is of bounded variation, then $v_{x}$ is called its total variation.
In the case that $x$ is of bounded variation, there exists a unique decomposition
\begin{align*}
x = x^{+} - x^{-},
\end{align*}
where $x^{+}$ and $x^{-}$ are nondecreasing and satisfy $v_{x} = x^{+} + x^{-}$ (cf.\ \cite[Pr.\ I.3.3]{js2003}). The following lemma is obvious.
\begin{lemma}
\label{lem:bddvarisbdd}
A function of bounded variation is bounded on compact intervals and so is its total variation.
\end{lemma}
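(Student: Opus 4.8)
The plan is to extract everything from two elementary observations about the total variation: that $v_{x}$ is nondecreasing, and that the two-point partition already controls the increment of $x$ from the origin.

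First I would establish monotonicity of $v_{x}$. If $0 < s \leq t$, then any partition $0 \leq t_{0} < t_{1} < \dotsc < t_{n} \leq s$ is also admissible in the supremum defining $v_{x} \left( t \right)$, since $t_{n} \leq s \leq t$. Hence the supremum defining $v_{x} \left( t \right)$ is taken over a superset of the partitions defining $v_{x} \left( s \right)$, which yields $v_{x} \left( s \right) \leq v_{x} \left( t \right)$. This monotonicity, together with the hypothesis that $v_{x} \left( T \right) < \infty$ for each fixed $T$, immediately settles the second assertion: for every $t \in \left[ 0 , T \right]$ we have $v_{x} \left( t \right) \leq v_{x} \left( T \right) < \infty$, so $v_{x}$ is bounded on $\left[ 0 , T \right]$ by $v_{x} \left( T \right)$.

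For the boundedness of $x$ itself, I would feed the trivial two-point partition $\left\lbrace 0 , t \right\rbrace$ into the definition of $v_{x} \left( t \right)$, giving $\abs{ x \left( t \right) - x \left( 0 \right) } \leq v_{x} \left( t \right)$. Combining this with the triangle inequality and the monotonicity just established, for every $t \in \left[ 0 , T \right]$ one obtains
\begin{align*}
\abs{ x \left( t \right) } \leq \abs{ x \left( 0 \right) } + \abs{ x \left( t \right) - x \left( 0 \right) } \leq \abs{ x \left( 0 \right) } + v_{x} \left( t \right) \leq \abs{ x \left( 0 \right) } + v_{x} \left( T \right) < \infty,
\end{align*}
so $x$ is bounded on $\left[ 0 , T \right]$ as well.

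There is no real obstacle here, which is consistent with the lemma being labelled obvious; the only points requiring any care are justifying the monotonicity of $v_{x}$ directly from the definition (namely that admissible partitions for the shorter interval remain admissible for the longer one) and noting the degenerate case $v_{x} \left( 0 \right) = 0$, for which the bounds hold trivially.
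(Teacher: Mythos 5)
Your proof is correct and is exactly the standard argument the paper has in mind when it declares the lemma obvious (the paper omits the proof entirely): monotonicity of $v_{x}$ from nesting of admissible partitions, plus the two-point partition bound $\abs{ x \left( t \right) - x \left( 0 \right) } \leq v_{x} \left( t \right)$. Nothing is missing.
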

An interesting question is what happens when functions of bounded variation are mapped to other functions. The next two results describe situations in which the property of bounded variation is retained.
\begin{lemma}
\label{lem:bddvarctufunc}
Let $x \colon \left[ 0,\infty \right) \to \mathbb{R}$ be of bounded variation and let $f \colon \mathbb{R} \to \mathbb{R}$ be a continuous function that is Lipschitz continuous on compact intervals. Then $y \left( t \right) = f \left( x \left( t \right) \right)$ is of bounded variation. Additionally, given $T > 0$, there exists $c_{T} > 0$ such that $v_{y} \left( t \right) \leq c_{T} v_{x} \left( t \right)$ for all $t \in \left[ 0,T \right]$.
\end{lemma}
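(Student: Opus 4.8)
The plan is to reduce everything to the defining supremum over partitions and to exploit the fact, recorded in Lemma \ref{lem:bddvarisbdd}, that a function of bounded variation is bounded on compact intervals. First I would fix $T > 0$ and set $M = \sup_{0 \leq s \leq T} \abs{x \left( s \right)}$, which is finite by Lemma \ref{lem:bddvarisbdd}. Hence $x$ maps $\left[ 0,T \right]$ into the compact interval $\left[ -M , M \right]$. Since $f$ is Lipschitz continuous on compact intervals, there exists a constant $c_{T} > 0$ such that $\abs{f \left( u \right) - f \left( v \right)} \leq c_{T} \abs{u - v}$ for all $u , v \in \left[ -M , M \right]$.

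The key step is then a termwise comparison of increments. For any $t \in \left[ 0,T \right]$ and any partition $0 \leq t_{0} < t_{1} < \dotsc < t_{n} \leq t$, applying the Lipschitz bound to each increment gives
\[
\sum_{k=1}^{n} \abs{y \left( t_{k} \right) - y \left( t_{k-1} \right)} = \sum_{k=1}^{n} \abs{f \left( x \left( t_{k} \right) \right) - f \left( x \left( t_{k-1} \right) \right)} \leq c_{T} \sum_{k=1}^{n} \abs{x \left( t_{k} \right) - x \left( t_{k-1} \right)} \leq c_{T} v_{x} \left( t \right).
\]
Taking the supremum over all such partitions yields $v_{y} \left( t \right) \leq c_{T} v_{x} \left( t \right)$. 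In particular $v_{y} \left( t \right)$ is finite for every $t \in \left[ 0,T \right]$, and since $T > 0$ was arbitrary this shows that $y$ is of bounded variation; the displayed inequality is exactly the claimed bound, valid uniformly on $\left[ 0,T \right]$ with the single constant $c_{T}$.

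I do not expect a genuine obstacle here. The only point requiring care is that the Lipschitz constant of $f$ depends on the range of $x$, so it is essential to first bound $x$ on $\left[ 0,T \right]$ — this is precisely where Lemma \ref{lem:bddvarisbdd} is used — in order to obtain a single constant $c_{T}$ that works simultaneously for every partition and every $t \in \left[ 0,T \right]$. Without this boundedness one could only extract a locally varying Lipschitz constant, which would not suffice for the uniform estimate $v_{y} \left( t \right) \leq c_{T} v_{x} \left( t \right)$ on all of $\left[ 0,T \right]$.
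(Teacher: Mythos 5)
Your proof is correct and follows essentially the same route as the paper's: bound $x$ on $\left[ 0,T \right]$ via Lemma \ref{lem:bddvarisbdd}, extract a Lipschitz constant for $f$ on the resulting compact range, and compare increments termwise over partitions. The only difference is that you spell out the partition sums explicitly where the paper passes directly from the increment bound to the total variation bound.
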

\begin{proof}
Fix $T > 0$. Lemma \ref{lem:bddvarisbdd} implies that $x$ is bounded by a constant $B > 0$ on $\left[ 0,T \right]$. The function $f$ is Lipschitz continuous on $\left[ -B , B \right]$ with Lipschitz constant $c > 0$. It follows that $\abs{ y \left( t \right) - y \left( s \right) } \leq c \abs{ x \left( t \right) - x \left( s \right) }$ for all $s,t \in \left[ 0,t \right]$. Hence, the total variation of $y$ over $\left[ 0,T \right]$ is bounded by $c$ times the total variation of $x$ over $\left[ 0,T \right]$.
\end{proof}

\begin{lemma}
\label{lem:bddvarmultiplication}
Let $x \colon \left[ 0,\infty \right) \to \mathbb{R}$ and $y \colon \left[ 0,\infty \right) \to \mathbb{R}$ be of bounded variation. Then $z \left( t \right) = x \left( t \right) y \left( t \right)$ is of bounded variation. Additionally, given $T > 0$, there exists $c_{T} > 0$ such that $v_{z} \left( t \right) \leq c_{T} \left( v_{x} \left( t \right) + v_{y} \left( t \right) \right)$ for all $t \in \left[ 0,T \right]$.
\end{lemma}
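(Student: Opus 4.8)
The plan is to exploit the standard cross-term decomposition for a product, reducing everything to the bounded variation of the individual factors together with the fact (Lemma \ref{lem:bddvarisbdd}) that a function of bounded variation is bounded on compact intervals.

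First I would fix $T > 0$ and invoke Lemma \ref{lem:bddvarisbdd} to obtain a constant $B_{T} > 0$ bounding both $\abs{x}$ and $\abs{y}$ on $\left[ 0,T \right]$. Next, for any partition $0 \leq t_{0} < t_{1} < \dotsc < t_{n} \leq t$ with $t \in \left[ 0,T \right]$, I would write the increment of $z = xy$ over a subinterval by inserting and subtracting a mixed term:
\begin{align*}
z \left( t_{k} \right) - z \left( t_{k-1} \right)
&= x \left( t_{k} \right) \left( y \left( t_{k} \right) - y \left( t_{k-1} \right) \right) + y \left( t_{k-1} \right) \left( x \left( t_{k} \right) - x \left( t_{k-1} \right) \right).
\end{align*}
Taking absolute values and using the bound $B_{T}$ gives
\begin{align*}
\abs{ z \left( t_{k} \right) - z \left( t_{k-1} \right) }
&\leq B_{T} \abs{ y \left( t_{k} \right) - y \left( t_{k-1} \right) } + B_{T} \abs{ x \left( t_{k} \right) - x \left( t_{k-1} \right) }.
\end{align*}

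Then I would sum over the partition, bound each of the two resulting sums by the corresponding total variations $v_{y} \left( t \right)$ and $v_{x} \left( t \right)$, and finally take the supremum over all partitions of $\left[ 0,t \right]$. This yields $v_{z} \left( t \right) \leq B_{T} \left( v_{x} \left( t \right) + v_{y} \left( t \right) \right)$ for all $t \in \left[ 0,T \right]$, so the claim holds with $c_{T} = B_{T}$; in particular the finiteness of $v_{x}$ and $v_{y}$ forces $v_{z} \left( t \right) < \infty$, establishing that $z$ is of bounded variation. There is no real obstacle here: the only point requiring care is that $B_{T}$ is a genuine uniform bound on the whole interval $\left[ 0,T \right]$ rather than a pointwise one, which is exactly what Lemma \ref{lem:bddvarisbdd} supplies, and that the same $c_{T}$ works simultaneously for every $t \in \left[ 0,T \right]$ since the bound $B_{T}$ does not depend on $t$.
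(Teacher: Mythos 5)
Your proposal is correct and follows essentially the same route as the paper: the same add-and-subtract cross-term decomposition of the product increment, the same appeal to Lemma \ref{lem:bddvarisbdd} for the uniform bound $B_{T}$, and the same conclusion with $c_{T} = B_{T}$. Nothing is missing.
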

\begin{proof}
Fix $T > 0$. By Lemma \ref{lem:bddvarisbdd}, both $x$ and $y$ are bounded by a constant $B > 0$ on $\left[ 0,T \right]$. Then
\begin{align*}
\abs{ z \left( t \right) - z \left( s \right) }
&= \abs{ x \left( t \right) y \left( t \right) - x \left( s \right) y \left( s \right) } \\
&= \abs{ x \left( t \right) y \left( t \right) - x \left( t \right) y \left( s \right) + x \left( t \right) y \left( s \right) - x \left( s \right) y \left( s \right) } \\
&\leq \abs{ x \left( t \right) } \abs{ y \left( t \right) - y \left( s \right) } + \abs{ x \left( t \right) - x \left( s \right) } \abs{ y \left( s \right) } \\
&\leq B \abs{ y \left( t \right) - y \left( s \right) } + B \abs{ x \left( t \right) - x \left( s \right) }
\end{align*}
 for all $s,t \in \left[ 0,t \right]$. Hence, the total variation of $z$ over $\left[ 0,T \right]$ is bounded by $B$ times the sum of the total variation of $x$ and the total variation of $y$ over $\left[ 0,T \right]$.
\end{proof}

\begin{lemma}
\label{lem:alternatingjumps}
Let $y \colon \left[ 0,\infty \right) \to \mathbb{R}$ be a function of bounded variation and let $x \in D \left( \left[ 0,\infty \right) ; \left\lbrace 0,1 \right\rbrace \right)$. Then the Lebesgue-Stieltjes integral $y \stdot x$ satisfies
\begin{align}
\sup_{0 \leq t \leq T} \abs{ \int_{0}^{t} y \left( s \right) \, \dd x \left( s \right) } 
&\leq v_{y} \left( T \right) + \sup_{0 \leq t \leq T}  \abs{ y \left( t \right) } \label{eq:altjumps}
\end{align}
for every fixed $T > 0$.
\end{lemma}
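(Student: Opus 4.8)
The plan is to reduce the Lebesgue--Stieltjes integral to a finite, sign-alternating sum of values of $y$, and then control that sum by the total variation and supremum of $y$.

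First I would pin down the structural properties of $x$. Since $x$ is c\`{a}dl\`{a}g and takes only the two values $0$ and $1$, right-continuity forces $x$ to be locally constant to the right of every point, and every jump of $x$ has size exactly $1$. Because a c\`{a}dl\`{a}g function has only finitely many jumps of size at least $1/2$ on any compact interval, $x$ has finitely many jump times $0 < s_{1} < \dotsb < s_{N} \leq T$ in $\left( 0,T \right]$, is piecewise constant there, and the induced measure $\dd x$ is purely atomic, carrying an atom of mass $\Delta x \left( s_{k} \right) = x \left( s_{k} \right) - x \left( s_{k} - \right) \in \left\lbrace +1 , -1 \right\rbrace$ at each $s_{k}$ and no continuous part. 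The decisive point is that, because $x$ merely alternates between $0$ and $1$, these signs alternate as well: $\Delta x \left( s_{k+1} \right) = - \Delta x \left( s_{k} \right)$.

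Consequently, writing $\epsilon_{k} = \Delta x \left( s_{k} \right)$, for every $t \in \left[ 0,T \right]$ the integral collapses to the alternating sum
\begin{align*}
\int_{0}^{t} y \left( s \right) \, \dd x \left( s \right) = \sum_{k \, : \, s_{k} \leq t} \epsilon_{k} \, y \left( s_{k} \right).
\end{align*}
I would then pair consecutive terms $\left( s_{1} , s_{2} \right) , \left( s_{3} , s_{4} \right) , \dotsc$. Each pair contributes, in absolute value, $\abs{ y \left( s_{2j-1} \right) - y \left( s_{2j} \right) } \leq v_{y} \left( \left[ s_{2j-1} , s_{2j} \right] \right)$. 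As the pairing intervals are disjoint subintervals of $\left[ 0,T \right]$ and the total variation is monotone and additive over adjacent intervals, the paired contributions sum to at most $v_{y} \left( T \right)$. If the number of jump times in $\left( 0,t \right]$ is odd, a single unpaired term $\epsilon_{k} y \left( s_{k} \right)$ remains, bounded in absolute value by $\sup_{0 \leq s \leq T} \abs{ y \left( s \right) }$; if it is even, there is no leftover. In either case the bound in Eq.\ \eqref{eq:altjumps} holds for that fixed $t$, uniformly over $t \in \left[ 0,T \right]$, and taking the supremum over $t$ gives the claim.

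The main obstacle is making the reduction to the atomic sum together with the sign alternation watertight, that is, justifying that $x$ is genuinely piecewise constant with finitely many, sign-alternating jumps on $\left[ 0,T \right]$ so that neither an accumulation of jumps nor a continuous part of $\dd x$ can arise. Once that structural fact is established, the pairing estimate is routine; the only remaining care is the uniform treatment of both parities of the jump count and of both possible signs of the first jump, and this is exactly what the slack term $\sup_{0 \leq s \leq T} \abs{ y \left( s \right) }$ absorbs.
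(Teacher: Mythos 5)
Your proposal is correct and follows essentially the same route as the paper: reduce the integral to a finite alternating sum over the jump times of $x$, bound the paired consecutive differences by $v_{y} \left( T \right)$, and absorb the single leftover term in the odd case by $\sup_{0 \leq t \leq T} \abs{ y \left( t \right) }$. The only cosmetic difference is that the paper handles the odd case by splitting the integral just after the penultimate jump and reusing the even case, whereas you pair directly and leave the last term unpaired; you are also somewhat more explicit about why $x$ is piecewise constant with finitely many alternating unit jumps, which the paper takes as evident.
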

\begin{proof}
We only have to prove that
\begin{align}
\abs{ \int_{0}^{t} y \left( s \right) \, \dd x \left( s \right) } 
&\leq v_{y} \left( t\right) + \sup_{0 \leq s \leq t}  \abs{ y \left( s \right) }, \label{eq:altjumpsnonsup}
\end{align}
for fixed $t \geq 0$, because $t \mapsto v_{y} \left( t\right)$ and $t \mapsto \sup_{0 \leq s \leq t}  \abs{ y \left( s \right) }$ are nondecreasing.

Clearly, $x$ has alternating jumps of size $+1$ and $-1$, and is constant between jumps. Thus, if $x$ has at most one jump in $\left[ 0,t \right]$, then Eq.\ \eqref{eq:altjumpsnonsup} is trivial.

Suppose that $x$ has exactly $2m$ jumps in $\left[ 0,t \right]$ (where $m \in \mathbb{N}$) and denote the corresponding jump times by $0 < s_{1} < \dotsc < s_{2m} \leq t$. If the first jump equals $+1$, then
\begin{align*}
\int_{0}^{t} y \left( s \right) \, \dd x \left( s \right) 
&= \sum_{k=0}^{m-1} \left( y \left( s_{2k + 1} \right) - y \left( s_{2k + 2} \right) \right),
\end{align*}
so
\begin{align}
\abs{ \int_{0}^{t} y \left( s \right) \, \dd x \left( s \right) } 
&\leq \sum_{k=0}^{m-1} \abs{ y \left( s_{2k + 2} \right) - y \left( s_{2k + 1} \right) } \leq v_{y} \left( t \right). \label{eq:altjumpseven}
\end{align}
If the first jump equals $-1$, then
\begin{align*}
\int_{0}^{t} y \left( s \right) \, \dd x \left( s \right) 
&= \sum_{k=0}^{m-1} \left( -y \left( s_{2k + 1} \right) + y \left( s_{2k + 2} \right) \right),
\end{align*}
so Eq.\ \eqref{eq:altjumpseven} holds in this case, too. Hence, Eq.\ \eqref{eq:altjumpsnonsup} holds when $x$ has an even number of jumps in $\left[ 0,t \right]$.

Suppose that $x$ has exactly $2m+1$ jumps in $\left[ 0,t \right]$ (where $m \in \mathbb{N}$) and denote the corresponding jump times by $0 < s_{1} < \dotsc < s_{2m} < s_{2m+1} \leq t$. Taking $\delta = \left( s_{2m+1} - s_{2m} \right)/2$, we get
\begin{align*}
\abs{ \int_{0}^{t} y \left( s \right) \, \dd x \left( s \right) }
&= \abs{ \int_{0}^{s_{2m} + \delta} y \left( s \right) \, \dd x \left( s \right) + \int_{s_{2m} + \delta}^{t} y \left( s \right) \, \dd x \left( s \right) } \\
&\leq \abs{ \int_{0}^{s_{2m} + \delta} y \left( s \right) \, \dd x \left( s \right) } + \abs{ y \left( s_{2m+1} \right) } \\
&\leq v_{y} \left( s_{2m} \right) + \abs{ y \left( s_{2m+1} \right) } \\
&\leq v_{y} \left( t \right) + \sup_{0 \leq s \leq t} \abs{ y \left( s \right) }.
\end{align*}
Hence, Eq.\ \eqref{eq:altjumpsnonsup} also holds when $x$ has an odd number of jumps in $\left[ 0,t \right]$.
\end{proof}

\bibliographystyle{plain}
\bibliography{ref}

\end{document}